\theoremstyle{plain}
\newtheorem{Thm}{Theorem}[section]
\title{The phylogeny graphs of doubly partial orders}
\author{\begin{tabular}{c}
{\sc Boram PARK} \\
[1ex]
{\small 
National Institute for Mathematical Sciences} \\
{\small 
Daejeon 305-811, Korea} \\
{\small 
{\tt borampark@nims.re.kr}} \\
\\
{\sc Yoshio SANO}
\thanks{Corresponding author.} \\
[1ex]
{\small
Division of Information Engineering} \\
{\small
Faculty of Engineering, Information and Systems} \\
{\small 
University of Tsukuba, 
Ibaraki 305-8573, Japan} \\
{\small
{\tt sano@cs.tsukuba.ac.jp}}\\
\end{tabular} }
\date{}
\begin{document}

\maketitle

\begin{abstract}
The competition graph of a doubly partial order is
known to be an interval graph.
The CCE graph and the niche graph of a doubly partial order
are also known to be interval graphs
if the graphs do not contain a cycle of length four and three
as an induced subgraph, respectively.
Phylogeny graphs are variant of competition graphs.
The phylogeny graph $P(D)$ of a digraph $D$ is
the (simple undirected) graph defined by $V(P(D)):=V(D)$ and
$E(P(D)):=\{xy \mid N^+_D(x) \cap N^+_D(y) \neq \emptyset \}
\cup \{xy \mid (x,y) \in A(D) \}$,
where $N^+_D(x):=\{v \in V(D) \mid (x,v) \in A(D)\}$.

In this note, we show that the phylogeny graph of
a doubly partial order is an interval graph. 
We also show that, 
for any interval graph $G$,
there exists an interval graph $\tilde{G}$ 
such that $\tilde{G}$ contains the graph $G$ as an induced subgraph
and that $\tilde{G}$ is the phylogeny graph of a doubly partial order.

\medskip
\noindent
{\bf Keywords:} Competition graph, Phylogeny graph,
Doubly partial order, Interval graph 

\medskip
\noindent
{\bf 2010 Mathematics Subject Classification:} 05C20, 05C75
\end{abstract}

\section{Introduction}

Throughout this note,
all graphs and all digraphs are finite and simple.
The notion of competition graphs was introduced by Cohen~\cite{cohen1}
in a connection with a problem in ecology.
Given a digraph $D$, if $(u,v)$ is an arc of $D$, then
we call $v$ a {\em prey} of $u$ and $u$ a {\em predator} of $v$.
The {\em competition graph} $C(D)$ of a digraph $D$
is the graph which has the same vertex set as $D$
and has an edge between two distinct vertices $u$ and $v$
if and only if there exists a common prey of $u$ and $v$ in $D$.
Since Cohen introduced the notion of competition graphs,
several variants have been defined and studied by many authors
(see the survey articles by
Kim~\cite{Kim93} and Lundgren~\cite{Lundgren89}).
For example,
Scott~\cite{sc} introduced
competition-common enemy graphs (or CCE graphs), 
Cable, Jones, Lundgren, and Seager~\cite{cable} 
introduced niche graphs, 
and Sonntag and Teichert introduced 
competition hypergraphs~\cite{ST04} 
(see also~\cite{S09} for competition multihypergraphs 
and \cite{PS11} for the hypercompetition numbers of hypergraphs). 
As another variant,
Roberts and Sheng~\cite{Bolyai, RS97, RS98, RS2000}
introduced phylogeny graphs.
The {\em phylogeny graph} of a digraph $D$
is the graph which
has the same vertex set as $D$ and
has an edge between two distinct vertices $u$ and $v$ if and only if
there exists an arc from $u$ to $v$ or an arc from $v$ to $u$
or a common prey of $u$ and $v$ in $D$.

In the study of competition graphs and their variants,
one of important problems
is characterizing the competition graphs of 
interesting classes of digraphs 
(see \cite{fi, Fraughnaugh,kimrob,LK,lunmayras,S}
for some studies on this research direction).
In this note, we study the phylogeny graphs of doubly partial orders.
Doubly partial orders are digraphs defined as follows.
For a point $x$ in $\mathbb{R}^2$, we denote its first
and second coordinates by $x_1$ and $x_2$, respectively.
For $x, y \in \mathbb{R}^2$, we write
$x \prec y$ if $x_1 < y_1$ and $x_2 < y_2$.
A digraph $D$ is called a \emph{doubly partial order}
if there exists a finite subset $V$ of $\mathbb{R}^2$ such that
$V(D) = V$
and
$A(D) = \{(x,v) \mid v,x\in V, v \prec x \}$.
Note that,
by definition,
the out-neighborhood
$N^+_D(x) := \{v \in V(D) \mid (x,v) \in A(D)\}$
of a vertex $x$ in a doubly partial order $D$
is the set $\{v \in V(D) \mid v \prec x\}$.

We recall some results on variants of competition graphs 
of doubly partial orders.
A graph $G$ is called an \emph{interval graph}
if there exists an assignment $J:V(G) \to 2^{\mathbb{R}}$ of
real closed intervals $J(v)$ to the vertices $v$ of $G$ 
such that, for any two distinct vertices $v$ and $w$,
$vw \in E(G)$
if and only if
$J(v) \cap J(w) \neq \emptyset$.
In 2005, Cho and Kim \cite{chokim} showed the following:

\begin{Thm}[\cite{chokim}]\label{thm:CK}
The competition graph of a doubly partial order is
an interval graph.
\end{Thm}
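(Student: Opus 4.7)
The plan is to extract an explicit interval representation of the competition graph $C(D)$ from the coordinates of the vertices. Sort $V\subseteq\mathbb{R}^2$ as $v_1,\dots,v_n$ so that $(v_1)_1\le(v_2)_1\le\cdots\le(v_n)_1$, breaking ties arbitrarily, and write $\pi_i:=(v_i)_2$. For each index $i$ let
\[
P_i:=\{k<i:(v_k)_1<(v_i)_1\text{ and }\pi_k<\pi_i\};
\]
by the sort this is exactly the set of indices of preys of $v_i$ in $D$.

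The interval assigned to $v_i$ will be
\[
I(v_i):=[m_i,\pi_i]\qquad\text{with}\qquad m_i:=\min_{k\in P_i}\pi_k
\]
whenever $P_i\neq\emptyset$. If $P_i=\emptyset$, then $v_i$ has no prey and is therefore isolated in $C(D)$; in that case $I(v_i)$ is taken to be a private singleton interval, placed on a part of the real line disjoint from every other interval in the representation. I claim that this assignment is an interval representation of $C(D)$.

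The heart of the verification is to show, for two indices $i<j$ with $P_i,P_j\neq\emptyset$, that $I(v_i)\cap I(v_j)\neq\emptyset$ holds if and only if $v_i$ and $v_j$ share a prey in $D$. Since $(v_i)_1\le(v_j)_1$, a common prey is exactly some $v_k$ with $(v_k)_1<(v_i)_1$, $\pi_k<\pi_i$, and $\pi_k<\pi_j$, which by the definition of $m_i$ is equivalent to $m_i<\pi_j$. Interval intersection reduces to $m_i\le\pi_j$ together with $m_j\le\pi_i$; the latter inequality is automatic from a brief case analysis (using $v_i\in P_j$ when $(v_i)_1<(v_j)_1$ and $\pi_i<\pi_j$, and $m_j<\pi_j\le\pi_i$ in the remaining sub-cases, including the one in which $(v_i)_1=(v_j)_1$).

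The principal obstacle I foresee is the treatment of coordinate ties: a boundary coincidence $m_i=\pi_j$, which can be caused by two vertices sharing a second coordinate, would convert the strict inequality characterizing a common prey into a spurious non-empty intersection, and equal first coordinates similarly require a consistent tie-breaking to keep the prey structure transparent. Both issues can be resolved by a small generic shift of interval endpoints, or by a lex-order refinement of the sort that does not alter the arc set of $D$; once these details are in place, the equivalence above yields the required interval representation of $C(D)$.
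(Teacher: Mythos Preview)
This theorem is quoted from Cho and Kim without proof in the present paper, so there is no ``paper's own proof'' to compare against. The paper does, however, prove the analogous Theorem~\ref{thm:interval} for phylogeny graphs, and that argument uses the diagonal projection $f(x)=x_2-x_1$ and assigns to each non-isolated $x$ the interval $\mathrm{conv}\{f(a):a\in N_D^+(x)\cup\{x\}\}$. Your construction differs in two ways: you project onto the second coordinate $x\mapsto x_2$ rather than onto $x_2-x_1$, and your right endpoint is the vertex's own height $\pi_i$ rather than the maximum prey height.

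Your argument is correct. The two core claims --- that $m_j\le\pi_i$ is automatic for $i<j$, and that a common prey exists iff $m_i<\pi_j$ --- are right, and your case analysis for the first (splitting on whether $(v_i)_1<(v_j)_1$ and whether $\pi_i<\pi_j$) goes through; in the delicate sub-case $(v_i)_1=(v_j)_1$, $\pi_i<\pi_j$, the point is that $P_i\subseteq P_j$ and $P_i\neq\emptyset$, which you should state explicitly.

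One suggestion: the tie problem you flag is an artifact of using $\pi_i$ as the right endpoint. If instead you take $I(v_i)=[m_i,M_i]$ with $M_i:=\max_{k\in P_i}\pi_k$, the issue disappears: whenever $i<j$ and $m_i\le M_j$, the vertex $v_\ell\in P_i$ attaining $m_i$ satisfies $(v_\ell)_1<(v_i)_1\le(v_j)_1$ and $\pi_\ell=m_i\le M_j<\pi_j$, so $v_\ell\in P_j$ is already a common prey --- no perturbation required. Your proposed remedies (``generic shift of interval endpoints'', ``lex-order refinement'') are vague as written; if you keep your intervals, the cleanest fix is to note that $V\subset\mathbb{R}^2$ can be perturbed so that all second coordinates are distinct without altering the relation $\prec$, after which $m_i=\pi_j$ is impossible.
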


\begin{Thm}[\cite{chokim}]\label{dpo2}
An interval graph can be made into the competition graph of
a doubly partial order by adding sufficiently many isolated vertices.
\end{Thm}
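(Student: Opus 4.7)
The plan is to construct the doubly partial order explicitly. First I would fix an interval representation $\{[l_v,r_v]:v\in V(G)\}$ of $G$ in which all $2|V(G)|$ endpoints are distinct; a generic perturbation achieves this without altering $G$. To each vertex $v$ I associate the point $P_v=(r_v,-l_v)\in\mathbb{R}^2$; note that $P_u\prec P_v$ precisely when $[l_u,r_u]\subsetneq[l_v,r_v]$. For each edge $e=uv$ of $G$, set $L_e=\max(l_u,l_v)$, $R_e=\min(r_u,r_v)$ (so $L_e<R_e$ by the edge condition and distinctness), let $m_e=(L_e+R_e)/2$, and place $Q_e=(m_e+\delta_e,\,-m_e-\delta_e)$ where $\delta_e>0$ are distinct tiny perturbations. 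Let $D$ be the doubly partial order on $V(D)=\{P_v\}\cup\{Q_e\}$. The goal is then to show that $C(D)$ is $G$ together with the $|E(G)|$ isolated vertices $\{Q_e\}$.

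The verification splits into two parts. First, I want the induced subgraph of $C(D)$ on $\{P_v\}$ to coincide with $G$. If $uv\in E(G)$, then $L_e<m_e+\delta_e<R_e$ (for $\delta_e$ sufficiently small) gives $Q_e\prec P_u$ and $Q_e\prec P_v$, so $Q_e$ is a common prey and the edge is created. Conversely, if $P_u,P_v$ share a common prey in $V(D)$, that prey is either some $P_k$ (in which case $[l_k,r_k]$ lies strictly inside both $[l_u,r_u]$ and $[l_v,r_v]$, so they meet) or some $Q_{e'}$ (in which case $m_{e'}+\delta_{e'}$ lies in both open intervals $(l_u,r_u)$ and $(l_v,r_v)$, so again they meet); either way $uv\in E(G)$.

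Second, I need each $Q_e$ to be isolated in $C(D)$, which I obtain by showing that $Q_e$ has no prey at all. Any $P_m\prec Q_e$ would require $r_m<m_e+\delta_e$ and $l_m>m_e+\delta_e$, contradicting $l_m\leq r_m$. Any $Q_{e'}\prec Q_e$ is impossible because all $Q$-points lie essentially on the antidiagonal $x+y=0$, on which two distinct points are never coordinate-wise comparable.

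The main obstacle I expect is ruling out spurious edges in $C(D)$ when $Q_e$ serves as a common prey of $P_k$ and $P_{k'}$ for some $\{k,k'\}\neq\{u,v\}$; it must be verified that any such pair is already an edge of $G$. The antidiagonal placement is precisely what handles this: the relation $Q_e\prec P_k$ reduces to the single scalar condition $m_e\in(l_k,r_k)$, so any two common predators of $Q_e$ force $m_e\in(l_k,r_k)\cap(l_{k'},r_{k'})$, making $[l_k,r_k]$ and $[l_{k'},r_{k'}]$ meet. Taking each $\delta_e$ smaller than half the minimum gap among endpoints and midpoints will keep all the strict inequalities robust under perturbation.
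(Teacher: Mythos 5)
Your construction is correct: mapping each interval $[l_v,r_v]$ to the point $(r_v,-l_v)$ and placing one point $Q_e$ per edge on the antidiagonal inside the corresponding intersection makes every $Q_e$ prey-free (hence isolated in the competition graph), realizes each edge $uv$ of $G$ via the common prey $Q_e$, and creates no spurious edges, since any common prey of $P_u$ and $P_v$ --- whether a $P_k$ or a $Q_{e'}$ --- forces $[l_u,r_u]\cap[l_v,r_v]\neq\emptyset$. The paper only quotes this theorem from Cho and Kim without reproducing a proof, and your argument is essentially the standard construction for it (one added isolated vertex per edge), so there is nothing further to compare; the details you flag as potential obstacles are all handled correctly.
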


\noindent
Here, we mention that Wu and Lu \cite{WL10} 
gave a further result on this subject. 
They showed that 
a graph is the competition graph of a doubly partial order 
if and only if it is an interval graph, at least
half of whose maximal cliques are isolated vertices. 

After the study of Cho and Kim, 
several authors studied variants of 
competition graphs of doubly partial orders. 
In 2007, Kim, Kim, and Rho \cite{SJkim} showed 
that 
the CCE graph of a doubly partial order is an interval graph
unless it contains $C_4$ as an induced subgraph. 
In 2009, Kim, Lee, Park, Park, and Sano \cite{KLPPS} showed
a similar result for niche graphs. 
The niche graph of a doubly partial order 
is an interval graph 
unless it contains $C_3$ as an induced subgraph. 
In 2011, Park, Lee, and Kim \cite{PLK} 
studied the $m$-step competition graphs of doubly partial orders, 
and showed that, for any positive integer $m$, 
the $m$-step competition graph of a doubly partial order 
is an interval graph. 
Recently, Kim, Lee, Park, and Sano \cite{KLPS12} 
studied the competition hypergraphs of doubly partial orders. 

As the phylogeny graph is an important variant
of the competition graph, 
it is natural to ask whether 
the phylogeny graph 
of a doubly partial order is an interval graph 
or not. 
In the following section,
we show that the phylogeny graph of a doubly partial order
is always an interval graph.

\section{Main Results}

We use the following notations in this section:
For $x, y \in \mathbb{R}^2$, 
\begin{eqnarray*}
&& x \searrow y \quad \iff \quad x_1 \le y_1 \text{ and } y_2 \le x_2 \\
&& x \wedge y := (\min\{x_1,y_1\}, \min\{x_2,y_2\}) \in \mathbb{R}^2.
\end{eqnarray*}
The following theorem is our first main result.

\begin{Thm}\label{thm:interval}
The phylogeny graph of a doubly partial order
is an interval graph.
\end{Thm}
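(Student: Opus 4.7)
The plan is to build an explicit interval representation of $P(D)$. First, I would reduce to the case where all first coordinates of points of $V$ are pairwise distinct: given a group of points sharing a first coordinate, shifting them by sufficiently small distinct positive amounts (in the order opposite to their second coordinates) leaves every strict inequality $u \prec v$ unchanged, so $D$, and hence $P(D)$, is unaffected.

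For each $v \in V$, I would set
\[
L(v) := \min\bigl(\{v_1\} \cup \{u_1 \mid u \in V,\ u \prec v\}\bigr),
\qquad J(v) := [L(v),\, v_1].
\]
The right endpoint is $v$'s own first coordinate; the left endpoint reaches down to the smallest first coordinate attained by $v$ or by any of its strict predecessors. I would then verify $J(v) \cap J(w) \neq \emptyset \iff vw \in E(P(D))$ for every distinct pair $v, w$. Assuming $v_1 < w_1$, the intersection is nonempty exactly when $L(w) \leq v_1$. The ``edge $\Rightarrow$ intersection'' direction is immediate: if $v \prec w$ then $v$ itself witnesses $L(w) \leq v_1$, while a common prey $z$ of $v$ and $w$ satisfies $z_1 < v_1$ and $z \prec w$, giving $L(w) \leq z_1 < v_1$.

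For the converse, given $L(w) \leq v_1$, I would pick $u \in V$ with $u \prec w$ and $u_1 \leq v_1$. If $u = v$ then $v \prec w$; otherwise $u_1 < v_1$ by distinctness, and I split on whether $u \prec v$. If so, $u$ is a common prey of $v$ and $w$. If not, then $u_2 \geq v_2$, and combining with $u_2 < w_2$ yields $v_2 < w_2$, hence $v \prec w$. This last case is the heart of the argument and the step I expect to be the main obstacle: it uses the two-dimensional geometry crucially via the sandwich $v_2 \leq u_2 < w_2$. The perturbation step, while a minor technicality, is genuinely necessary --- without distinct first coordinates, two incomparable points sharing a column would cause $J(v) \cap J(w)$ to contain their common $x$-value with no matching edge in $P(D)$.
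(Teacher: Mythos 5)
Your proof is correct, and it is a genuine variant of the paper's argument rather than a reproduction of it. Both proofs follow the same template --- assign to each vertex the convex hull of the images of its closed out-neighborhood $N^+_D(v)\cup\{v\}$ under a linear functional on $\mathbb{R}^2$ --- but with different functionals. You project onto the first coordinate, so that $J(v)=[L(v),v_1]$ with $L(v)=\min\{u_1 : u\in N^+_D(v)\cup\{v\}\}$; the paper projects onto $f(x)=x_2-x_1$ and sets $J(x)=\mathrm{conv}\{f(a) : a\in N^+_D(x)\cup\{x\}\}$. The trade-off is exactly where you located it: the coordinate projection is degenerate for doubly partial orders (two $\prec$-incomparable points in the same column would receive intersecting singleton or overlapping intervals with no edge in $P(D)$), so you must first put the first coordinates in general position, and your perturbation is sound --- small shifts ordered oppositely to the second coordinates neither create nor destroy any relation $u\prec v$, so the abstract digraph and hence $P(D)$ are unchanged. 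The functional $x_2-x_1$ avoids any perturbation, at the price of a longer separation argument for non-adjacent vertices (the paper treats the degenerate cases $x_1=y_1$ and $x_2=y_2$ separately, and also sets aside isolated vertices, which your representation handles uniformly). Your verification of $J(v)\cap J(w)\neq\emptyset \iff vw\in E(P(D))$ is complete in both directions; in particular, in the ``intersection implies edge'' direction the witness $u$ with $u\prec w$ and $u_1=L(w)\le v_1$ correctly splits into the cases $u=v$, $u\prec v$ (common prey), and $u\not\prec v$, where the sandwich $v_2\le u_2<w_2$ forces $v\prec w$.
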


\begin{proof}
Let $G$ be the phylogeny graph of
a doubly partial order $D$.
We shall give an interval assignment $J:V(G) \to 2^{\mathbb{R}}$
such that $G$ is the intersection graph of the family of those intervals.
We may disregard the isolated vertices of $G$
since
we can assign an interval to an isolated vertex 
that does not overlap with any other interval. 

Let $f:\mathbb{R}^2 \to \mathbb{R}$ be a function defined by
$f(x)=f(x_1,x_2):=x_2-x_1$.
For a (non-isolated) vertex $x$ of $D$,
we define
\[
J(x):=\textrm{conv} \{ f(a) \in \mathbb{R} \mid 
a \in N^+_D(x) \cup \{x\} \}
\subseteq \mathbb{R},
\]
where $\textrm{conv}(S)$ means the convex hull of a set $S$ in $\mathbb{R}$.
Note that $J(x) \neq \emptyset$
since $f(x) \in J(x)$.
We will show that the graph
obtained by deleting all the isolated vertices from $G$
is the intersection graph of the family of these intervals.

Take two adjacent vertices $x$ and $y$ of $G$.
Then it holds that $x \prec y$, or $y \prec x$, or
there exists a vertex $a$ such that $a \prec x$ and $a \prec y$.
If $x \prec y$, then $f(x) \in J(x) \cap J(y)$.
If $y \prec x$, then $f(y) \in J(x) \cap J(y)$.
If there exists a vertex $a$ such that $a \prec x$ and $a \prec y$,
then $f(a) \in J(x) \cap J(y)$.
Therefore, we have $J(x) \cap J(y) \neq \emptyset$.

Next, take two non-isolated vertices
$x$ and $y$ which are not adjacent in $G$.
Then it holds that $x \searrow y$ or $y \searrow x$.
Without loss of generality, we may assume that $x \searrow y$.
We claim that
\begin{equation}\label{eq:sep}
\min J(x) > \max J(y).
\end{equation}
Note that $x \wedge y = (x_1,y_2)$ since $x \searrow y$. 
Since $N^+_D(x) \cap N^+_D(y)=\emptyset$,
if $y_2<x_2$ then we have
\begin{eqnarray*}
N^+_D(x) 
&\subseteq& 
\{v \in \mathbb{R}^2 \mid v_1<x_1, y_2 \leq v_2 < x_2 \} \\
&\subseteq& 
\{v \in \mathbb{R}^2 \mid v_2-v_1 > y_2-x_1 \} \\
&=& 
\{v \in \mathbb{R}^2 \mid f(v) > f(x \wedge y) \}.
\end{eqnarray*}
Therefore,
\begin{equation}\label{eq:sep2}
y_2<x_2 \quad \Rightarrow \quad \min J(x) > f(x \wedge y).
\end{equation}
Similarly, if $x_1<y_1$  then we have
\begin{eqnarray*}
N^+_D(y) 
& \subseteq & 
\{v \in \mathbb{R}^2 \mid x_1 \leq v_1 <y_1, v_2 < y_2 \} \\
& \subseteq & 
\{v \in \mathbb{R}^2 \mid v_2-v_1 < y_2-x_1 \} \\
& = & 
\{v \in \mathbb{R}^2 \mid f(v) < f(x \wedge y) \}.
\end{eqnarray*}
Therefore,
\begin{equation}\label{eq:sep3}
x_1<y_1 \quad \Rightarrow \quad \max J(y) < f(x \wedge y).
\end{equation}
Suppose that $x_1=y_1$. 
Then $x \wedge y = y$. 
Since $N^+_D(y) = N^+_D(x \wedge y) \subseteq N^+_D(x)$ and
$N^+_D(x) \cap N^+_D(y)=\emptyset$,
it holds that $N^+_D(y) =\emptyset$.
Therefore $J(y) = \{ f(y) \}$. Thus we have
\begin{equation}\label{eq:sep4}
x_1=y_1 \quad \Rightarrow \quad \max J(y) = f(x \wedge y).
\end{equation}
Suppose that $x_2=y_2$. 
Then $x \wedge y = x$. 
Since $N^+_D(x) = N^+_D(x \wedge y) \subseteq N^+_D(y)$ and
$N^+_D(x) \cap N^+_D(y)=\emptyset$,
it holds that $N^+_D(x) =\emptyset$.
Therefore $J(x) = \{ f(x) \}$. Thus we have
\begin{equation}\label{eq:sep5}
x_2=y_2 \quad \Rightarrow \quad \min J(x) = f(x \wedge y).
\end{equation}
Now, we consider the three possible cases (see Figure \ref{fig:1}).
If $x_1<y_1$ and $y_2<x_2$,
then (\ref{eq:sep}) follows from (\ref{eq:sep2}) and (\ref{eq:sep3}).
If $x_1=y_1$ and $y_2<x_2$,
then (\ref{eq:sep}) follows from (\ref{eq:sep2}) and (\ref{eq:sep4}).
If $x_1<y_1$ and $x_2=y_2$,
then (\ref{eq:sep}) follows from (\ref{eq:sep3}) and (\ref{eq:sep5}).
Thus, we have $J(x) \cap J(y) = \emptyset$.

Hence the theorem holds.
\end{proof}

\begin{figure}
\psfrag{x}{$x$}
\psfrag{y}{$y$}
\psfrag{xy}{$x \wedge y$}
\psfrag{x1}{$x_1$}
\psfrag{x2}{$x_2$}
\psfrag{y1}{$y_1$}
\psfrag{y2}{$y_2$}
\begin{center}
\begin{tabular}{ccc}
\includegraphics[scale=0.4]{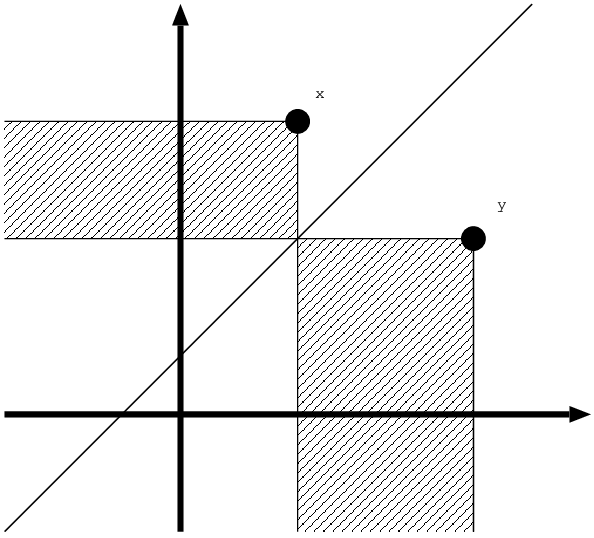} &
\includegraphics[scale=0.4]{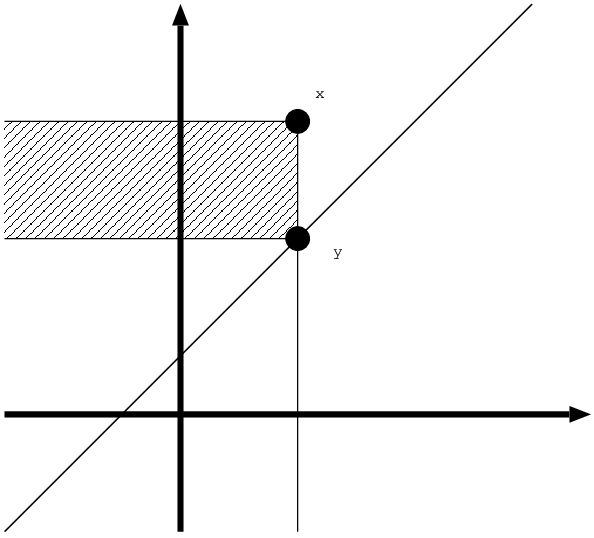} &
\includegraphics[scale=0.4]{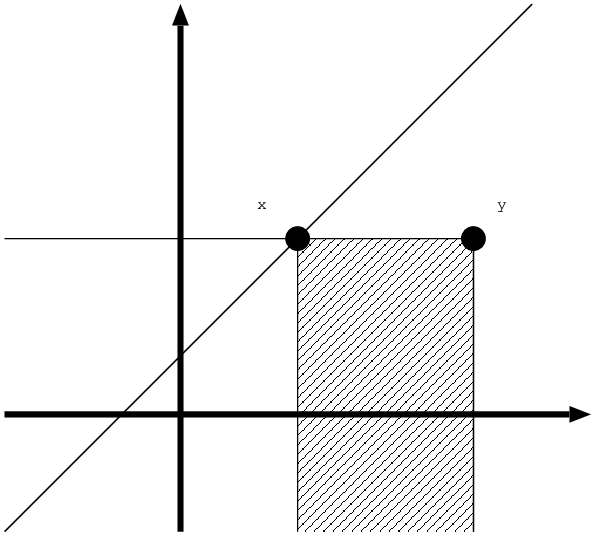}
\end{tabular}
\caption{Pictures for Proof of Theorem \ref{thm:interval}}
\label{fig:1}
\end{center}
\end{figure}

Theorem \ref{thm:CK} and Theorem \ref{dpo2} are complementary to each other. 
In the same fashion, after establishing Theorem \ref{thm:interval}, 
a natural question is if every interval graph 
is the phylogeny graph of some doubly partial order. 
However, the answer for this question is NO.
The following theorem shows that
not every interval graph $G$ has
a doubly partial order whose phylogeny graph is equal to $G$.

\begin{Thm}\label{thm:path}
Let $G$ be an interval graph. 
If $G$ has 
two adjacent vertices $u$ and $v$ of degree at least two 
such that the edge $uv$ is not contained in any triangle in $G$, 
then $G$ cannot be the phylogeny graph of a doubly partial order. 
\end{Thm}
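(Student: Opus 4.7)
The plan is proof by contradiction: assume $G = P(D)$ for some doubly partial order $D$ and produce a triangle through $uv$. The edge $uv \in E(P(D))$ must arise for one of three reasons. First, if $u$ and $v$ share an out-neighbor $a \in N^+_D(u) \cap N^+_D(v)$, then $a \neq u, v$, and the arcs $(u,a),(v,a) \in A(D)$ place both $ua$ and $va$ in $E(P(D))$, so $\{u,v,a\}$ is already a triangle containing $uv$, a contradiction. The only remaining possibilities are $u \prec v$ and $v \prec u$; by the symmetry of the hypothesis (both endpoints have degree at least two), one may assume $u \prec v$.

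With $u \prec v$ fixed, I would use the degree-at-least-two condition at $u$ to pick a neighbor $w$ of $u$ in $G$ with $w \neq v$, and then show in each of the three ways the edge $uw$ can arise that $vw$ is also an edge. The engine is transitivity of $\prec$: from $u \prec v$ one has $N^+_D(u) \subseteq N^+_D(v)$ and $u \in N^+_D(v)$. So if $w \prec u$, then $w \prec v$, and the arc $(v,w) \in A(D)$ gives $vw \in E(G)$. If $u \prec w$, then $u$ itself is a common prey of $v$ and $w$ (since $u \in N^+_D(v) \cap N^+_D(w)$), giving $vw \in E(G)$. If $u$ and $w$ share a common prey $b$ with $b \prec u$ and $b \prec w$, transitivity yields $b \prec v$, so $b$ is a common prey of $v$ and $w$, again giving $vw \in E(G)$. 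In every case the triangle $\{u,v,w\}$ contains the edge $uv$, contradicting the hypothesis.

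There is no real obstacle: once the common-prey reason for $uv$ is dispatched by a triangle on the spot, transitivity of the two-dimensional strict domination $\prec$ does all the work of promoting neighbors of $u$ to neighbors of $v$. The degree-at-least-two hypothesis on both endpoints is used exactly to supply the witness $w$ after the symmetry-breaking step; if only one endpoint were required to have degree at least two, we would still need it to be the endpoint that ends up on the ``lower'' side of $\prec$, and this is guaranteed only when both endpoints satisfy the degree bound.
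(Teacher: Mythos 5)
Your proof is correct and takes essentially the same approach as the paper's: dispatch the common-prey case for the edge $uv$ immediately, reduce by symmetry to a single orientation of $\prec$ between $u$ and $v$, and then use a second neighbor of the prey-side endpoint together with transitivity of $\prec$ to produce a triangle through $uv$ in each of the three cases. The only cosmetic difference is that the paper fixes $v \prec u$ and works with a neighbor of $v$, while you fix $u \prec v$ and work with a neighbor of $u$; the case analysis is otherwise identical, and your closing remark about why both endpoints need degree at least two is accurate.
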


\begin{proof}
Let $G$ be an interval graph and 
suppose that $G$ has two adjacent vertices $u$ and $v$ of degree at least two 
such that the edge $uv$ is not contained in any triangle in $G$. 
Let $a$ (resp. $b$) be a vertex of $G$ 
other than $v$ (resp. $u$) 
which is adjacent to $u$ (resp. $v$). 
Then 
$P=auvb$ is a path of length $3$. 
Suppose that there exists a doubly partial order $D$ 
such that the phylogeny graph of $D$ is equal to $G$. 
Since the edge $uv$ is not contained in any triangle in $G$,
it holds that $N_G(u) \cap N_G(v) =\emptyset$.
Since 
$N_G(u) \cap N_G(v) =\emptyset$,
there cannot exist a vertex $z$ in $D$
such that $z \prec u$ and $z \prec v$.
However, since $uv \in E(G)$, 
either
$u \prec v$ or $v \prec u$.
Without loss of generality,
we may assume that
$v \prec u$. 
Since $vb \in E(G)$, 
it holds that 
$v$ and $b$ have a common prey or 
$v \prec b$ 
or $b \prec v$. 
If there exists a vertex $z$ in $D$
such that $z \prec b$ and $z \prec v$,
then $z \prec u$ and
$b,u,v,z$ form a clique of size $4$ in $G$,
which is a contradiction to the fact that
$N_G(u) \cap N_G(v) =\emptyset$.
If $v \prec b$,
then $u$ and $b$ have $v$ as a common prey 
and so $u$ and $b$ are adjacent, which is also a contradiction 
to the fact that
$N_G(u) \cap N_G(v) =\emptyset$. 
If $b \prec v$, then $b \prec u$
and so $u$ and $b$ are adjacent,
which is a contradiction.
Therefore, in each case, we reach a contradiction.
Hence, $G$ cannot be the phylogeny graph of a doubly partial order.
\end{proof}

The above result is contrary to Theorem \ref{dpo2} stating that
we can make any interval graph into the competition graph 
of a doubly partial order by adding 
sufficiently many isolated vertices. 

Though not every interval graph is
the phylogeny graph of a doubly partial order
even if we allow us to add isolated vertices,
we can show that, given an interval graph $G$,
there exists an ``extension" of $G$
such that it is the phylogeny graph of a doubly partial order.

\begin{Thm}\label{thm:ext}
For any interval graph $G$,
there exists an interval graph $\tilde{G}$
such that $\tilde{G}$ contains the graph $G$ as an induced subgraph
and that $\tilde{G}$ is the phylogeny graph of a doubly partial order.
\end{Thm}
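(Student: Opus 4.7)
The plan is to construct the extension $\tilde{G}$ directly, as the phylogeny graph of a doubly partial order $D$ built from the clique-path structure of $G$. Since Theorem~\ref{thm:interval} automatically guarantees that every phylogeny graph of a doubly partial order is an interval graph, the content of the statement reduces to exhibiting a $D$ whose vertex set contains a copy of $V(G)$ and whose phylogeny graph induces exactly $G$ on that copy.

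Concretely, because $G$ is an interval graph, its maximal cliques admit a linear ordering $K_1,\ldots,K_m$ such that, for every $v\in V(G)$, the index set $\{i:v\in K_i\}$ is an interval $[L_v,R_v]\subseteq\{1,\ldots,m\}$. Pick pairwise distinct perturbations $\epsilon_v\in(0,1)$. For each $v\in V(G)$ I would place a point $p_v:=(R_v+\epsilon_v,\,-L_v+\epsilon_v)$, and for each maximal clique $K_i$ an auxiliary point $q_i:=(i,-i)$. Let $D$ be the doubly partial order on the union of these points, and set $\tilde{G}:=P(D)$; by Theorem~\ref{thm:interval}, $\tilde{G}$ is already an interval graph. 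The key preliminary lemma I would prove first is that $q_i\prec p_v$ holds if and only if $L_v\le i\le R_v$, i.e.\ exactly when $v\in K_i$; this is a short check using integrality of $L_v,R_v$ together with $\epsilon_v\in(0,1)$.

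Granting this lemma, the ``easy'' direction of $\tilde{G}[\{p_v:v\in V(G)\}]=G$ is immediate: for $uv\in E(G)$, any index $i$ with $u,v\in K_i$ gives a common prey $q_i$ of $p_u$ and $p_v$, producing the required edge in $\tilde{G}$. The main obstacle is the converse: showing that no unintended edges appear between $p_u$ and $p_v$ when $uv\notin E(G)$, equivalently when $[L_u,R_u]$ and $[L_v,R_v]$ are disjoint. Edges in $P(D)$ arise from either arcs or common preys, and I would rule both out using the same arithmetic with the $\epsilon$-perturbation. An arc $p_u\prec p_v$ would force $L_v\le L_u$ and $R_u\le R_v$, hence $[L_u,R_u]\subseteq[L_v,R_v]$, contradicting disjointness; a common prey of the form $q_j$ would have to satisfy $j\in[L_u,R_u]\cap[L_v,R_v]=\emptyset$, and a common prey of the form $p_x$ would, by the same arc analysis applied twice, force $[L_x,R_x]\subseteq[L_u,R_u]\cap[L_v,R_v]=\emptyset$.

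A minor technicality is that two vertices of $G$ may share the same interval $[L_v,R_v]$; the distinct $\epsilon_v$'s keep the points of $D$ distinct, and any arcs this creates lie between vertices that are already in a common maximal clique of $G$, so they do no harm. The role of the $\epsilon$-perturbation is precisely to reconcile the two requirements of the construction simultaneously: small enough that the integer parts of the coordinates control every $\prec$-comparison between a $p_v$ and a $q_i$, yet strictly positive so that each $p_v$ dominates $q_{L_v}$ and $q_{R_v}$ while missing $q_{L_v-1}$ and $q_{R_v+1}$. This is the delicate balance on which the whole construction turns.
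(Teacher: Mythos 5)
Your construction is correct, but it takes a genuinely different route from the paper. The paper does not build a doubly partial order from scratch: it invokes Theorem~\ref{dpo2} to obtain a doubly partial order $D$ whose \emph{competition} graph is $G$ plus isolated vertices, sets $\tilde{G}:=P(D)$, and then shows that the subgraph of $\tilde{G}$ induced by the vertices having a prey has exactly the edge set of $G$ (the point being that two non-adjacent vertices of $G$ that each have some prey cannot be $\prec$-comparable, since comparability would force a common prey). Your proof instead realizes the clique path $K_1,\ldots,K_m$ of $G$ directly, placing $q_i=(i,-i)$ on the antidiagonal and $p_v=(R_v+\epsilon_v,-L_v+\epsilon_v)$ just above-right of the block of cliques containing $v$; your key lemma $q_i\prec p_v\iff v\in K_i$ and the containment argument $p_u\prec p_v\Rightarrow[L_u,R_u]\subseteq[L_v,R_v]$ both check out (integrality plus $\epsilon_v\in(0,1)$ does exactly what you claim, and no $q_i$ has any prey, so $q$-points generate no spurious common-prey edges). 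What the paper's route buys is brevity --- it reuses the Cho--Kim theorem as a black box and never needs to touch coordinates. What your route buys is self-containment (modulo the Fulkerson--Gross consecutive-cliques characterization, which is also what underlies Theorem~\ref{dpo2}) and an explicit bound of $m$ added vertices, which directly gives $p_{\text{dpo}}(G)\le m$ for the parameter introduced in the concluding remarks; the paper's argument gives no such quantitative control. One cosmetic remark: arcs of a doubly partial order point \emph{down} to the prey, so the arc giving $q_i$ as a prey of $p_v$ is written $(p_v,q_i)$ with $q_i\prec p_v$ --- your inequalities are consistent with this, just make sure the phrase ``an arc $p_u\prec p_v$'' is read as the relation $p_u\prec p_v$ (i.e.\ $p_u$ is a prey of $p_v$) rather than as an arc from $p_u$ to $p_v$.
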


\begin{proof}
We may assume that $G$ has no isolated vertices.
By Theorem \ref{dpo2},
there exists a doubly partial order $D$ such that
the competition graph of $D$ is equal to 
the graph obtained from $G$ by adding 
sufficiently many isolated vertices. 
Let $\tilde{G}$ be the phylogeny graph of $D$. 
Then, by Theorem \ref{thm:interval}, 
$\tilde{G}$ is an interval graph. 
Let $S \subseteq V(D)$ be the set of vertices having a prey in $D$, 
that is,
\[
S := \{ v \in V(D) \mid N_D^{+}(v) \neq \emptyset \}.
\]
Let $H$ be the subgraph of $\tilde{G}$ induced by $S$.
We will show that $E(H)=E(G)$.

Take an edge $uv$ of $G$.
Since the competition graph of $D$ contains the graph $G$,
there exists a common prey $z$ of $u$ and $v$ in $D$.
Therefore, $u$ and $v$ are adjacent in $\tilde{G}$.
Also, since $u$ and $v$ have a prey in $D$,
it means that $u,v \in S = V(H)$.
Since $H$ is an induced subgraph of $\tilde{G}$, we have $uv\in E(H)$.
Therefore, $E(G) \subseteq E(H)$.

Take an edge $uv$ of $H$.
Since $u,v \in V(H)=S$,
we have $N_D^{+}(u) \neq \emptyset$ and $N_D^{+}(v) \neq \emptyset$.
To show that $uv$ is also an edge of $G$,
it is sufficient to show that $u$ and $v$ have a common prey in $D$.
Suppose that $u$ and $v$ do not have a common prey in $D$.
Since $N_D^{+}(u) \neq \emptyset$ and $N_D^{+}(v) \neq \emptyset$,
there exist two vertices $a$ and $b$ in $D$
such that $a \prec u$ and $b \prec v$.
Since $u$ and $v$ are adjacent in $H$
but have no common prey in $D$,
either $u \prec v$ or $v \prec u$. 
If $u \prec v$ then $a \prec v$ since $a \prec u$,
and so $a$ is a common prey of $u$ and $v$ in $D$, 
which is a contradiction.
Similarly,
if $v \prec u$ then $b \prec u$ since $b \prec v$,
and so $b$ is a common prey of $u$ and $v$ in $D$, 
which is a contradiction.
Therefore, $u$ and $v$ have a common prey in $D$.
Thus, $uv$ is an edge of $G$ and so $E(H) \subseteq E(G)$.
Hence, $E(H)=E(G)$.

Let $H_0$ be the subgraph of $\tilde{G}$
obtained by deleting all the isolated vertices from $H$.
Then $H_0$ is an induced subgraph of $\tilde{G}$ which is equal to $G$.
We complete the proof.
\end{proof}

\section{Concluding Remarks}

In this note, we showed that the phylogeny graph of a doubly partial order
is an interval graph (Theorem \ref{thm:interval})
and that any interval graph has a graph extension
which is the phylogeny graph of a doubly partial order
(Theorem \ref{thm:ext}).

By Theorem \ref{thm:ext}, for an interval graph $G$,
we can define the \emph{doubly partial order phylogeny number}
$p_{\text{dpo}}(G)$ of $G$ to be the smallest nonnegative integer $r$
such that $r:=|V(\tilde{G}) \setminus V(G)|$
where $\tilde{G}$ is an interval graph containing the graph $G$ 
as an induced subgraph
and $\tilde{G}$ is the phylogeny graph of a doubly partial order.
The doubly partial order phylogeny number of an interval graph $G$
may be different from the phylogeny number of $G$.
It can be shown by using the path $P$ of length three. 
By \cite[Theorem 7]{RS98},
if $G$ is a chordal graph, then
the phylogeny number $p(G)$ of $G$ is equal to $0$.
Therefore $p(P)=0$.
On the other hand, it follows from Theorem \ref{thm:path} that
$p_{\text{dpo}}(P) >0$.
It would be interesting to find 
the doubly partial order phylogeny numbers
of various interval graphs.


\end{document}